\documentclass[11pt,reqno]{article}

\usepackage[T1]{fontenc}
\usepackage[utf8]{inputenc}
\usepackage[margin=1in]{geometry}
\usepackage{amsmath,amssymb,amsthm,mathtools}
\usepackage{newtxtext,newtxmath}
\usepackage{booktabs}
\usepackage{enumitem}
\usepackage{microtype}
\usepackage{authblk}
\usepackage{xcolor}
\usepackage[hidelinks]{hyperref}

\definecolor{linkblue}{RGB}{22,76,130}
\hypersetup{
  colorlinks=true,
  linkcolor=linkblue,
  citecolor=linkblue,
  urlcolor=linkblue,
  pdftitle={A sharp product bound for disjoint cross-intersecting 3-graphs with covering number three},
  pdfauthor={Arthur F. Ramos, David B. Hulak, Ruy J. G. B. de Queiroz}
}

\newtheorem{theorem}{Theorem}[section]
\newtheorem{lemma}[theorem]{Lemma}
\newtheorem{proposition}[theorem]{Proposition}

\theoremstyle{definition}
\newtheorem{definition}[theorem]{Definition}

\newcommand{\cF}{\mathcal F}
\newcommand{\cG}{\mathcal G}
\newcommand{\cH}{\mathcal H}
\newcommand{\cK}{\mathcal K}
\newcommand{\cB}{\mathcal B}

\newcommand{\V}{V}
\newcommand{\Bext}{\mathsf B}
\newcommand{\Tthree}{\mathsf T_3}

\title{A Sharp Product Bound for Disjoint Cross-Intersecting 3-Graphs\\
with Covering Number Three}
\author[1]{Arthur F. Ramos\thanks{Corresponding author:
\href{mailto:arfreita@microsoft.com}{arfreita@microsoft.com}.}}
\author[2]{David B. Hulak}
\author[3]{Ruy J. G. B. de Queiroz}
\affil[1]{Microsoft}
\affil[2]{Independent Researcher}
\affil[3]{Centro de Inform\'atica, Universidade Federal de Pernambuco}
\date{}

\begin{document}
\maketitle
\vspace{-1.4em}

\begin{abstract}
Lin, Frankl, and Wu proved that the product of the sizes of two
cross-intersecting 3-uniform hypergraphs with covering number three is at most
$121$.  They conjectured that requiring the families to be disjoint lowers
the sharp bound to $100$.  We prove this conjecture.  The smaller family has
at most eleven edges; after fixing it, the other family may be enlarged to its
external family of 3-transversals.  Splitting by matching number leaves an elementary
intersecting case and two finite kernels.  We prove the correctness of both
encodings and every pruning rule, and exhaustive C++20 computations give the
external-blocker maxima $21,19,16,14,12,11,10,9$ for family sizes $4$ through
$11$.  Concrete witnesses are checked independently in Python.  A pair of
ten-edge families on six vertices attains product $100$.
\end{abstract}

\noindent\textbf{Keywords.}
Cross-intersecting hypergraphs; covering number; transversal; blocker;
computer-assisted proof.

\noindent\textbf{2020 Mathematics Subject Classification.}
Primary 05C65; Secondary 05D05, 68R10.

\section{Introduction}

A \emph{$k$-graph} is a finite family of $k$-element sets.  Its covering
number $\tau(\cF)$ is the minimum cardinality of a set meeting every member of
$\cF$.  Two families $\cF$ and $\cG$ are \emph{cross-intersecting} if
$F\cap G\ne\varnothing$ for every $F\in\cF$ and $G\in\cG$.  They are
\emph{disjoint} if $\cF\cap\cG=\varnothing$; their vertex supports may overlap.

For positive integers $k$ and $\ell$, Lin, Frankl, and Wu~\cite{LFW} defined
$m(k,\ell)$ to be the maximum of $|\cF||\cG|$ over cross-intersecting pairs in
which $\cF$ is a $k$-graph with $\tau(\cF)=\ell$ and $\cG$ is an
$\ell$-graph with $\tau(\cG)=k$.  This is a product version of the classical
covering-number problem for intersecting hypergraphs initiated by Erd\H{o}s
and Lov\'asz~\cite{EL}.  Among their exact small-parameter results, Lin,
Frankl, and Wu proved
\[
  m(3,3)=121.
\]
They also exhibited disjoint cross-intersecting 3-graphs of size ten and posed
Conjecture~7.2 of~\cite{LFW}.  Our main result resolves it.

\begin{theorem}\label{thm:main}
Let $\cF$ and $\cG$ be disjoint, cross-intersecting 3-graphs satisfying
$\tau(\cF)=\tau(\cG)=3$.  Then
\[
  |\cF||\cG|\le 100.
\]
The bound is best possible.
\end{theorem}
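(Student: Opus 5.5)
Assume without loss of generality that $|\cF|\le|\cG|$, and set $\V=\bigcup\cF$. For the upper bound it suffices to show that whenever $|\cF|\le 10$ we have $|\cG|\le 100/|\cF|$, and that the case $|\cF|\ge 11$ cannot occur. Since $|\cF|\le|\cG|$, the case $|\cF|\ge 11$ would force $|\cF||\cG|\ge 121$. I would rule it out by invoking the Lin--Frankl--Wu bound $m(3,3)=121$ together with a characterization of the extremal configurations: the disjointness hypothesis should exclude the equality case, so that $|\cF|\le 11$ holds and, if $|\cF|=11$, then $|\cG|\le 9$. The latter contradicts $|\cF|\le|\cG|$.

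\textbf{Reduction to the blocker.} With $\cF$ fixed, every $G\in\cG$ is a 3-set meeting all members of $\cF$, that is, a 3-transversal of $\cF$. Disjointness forces $G\notin\cF$. Hence
\[
\cG\subseteq\Bext(\cF):=\{T:\ |T|=3,\ T\cap F\ne\varnothing\ \forall F\in\cF\}\setminus\cF .
\]
Only vertices of $\V$ matter for meeting edges, so a 3-transversal may use up to two vertices outside $\V$. If $\Bext(\cF)$ contained a member with $\le 1$ vertex in $\V$, then $\tau(\cF)\le 1$, which is impossible. A member with exactly two vertices in $\V$ would give a 2-cover of $\cF$, contradicting $\tau(\cF)=3$. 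Therefore $\Bext(\cF)$ consists of 3-subsets of $\V$, and enlarging $\cG$ to $\Bext(\cF)$ only increases $|\cG|$. The condition $\tau(\cG)=3$ may be dropped for the upper bound. The problem thus becomes: for every 3-graph $\cF$ with $\tau(\cF)=3$ and $4\le|\cF|\le 11$, show $|\cF|\cdot|\Bext(\cF)|\le 100$. Here $|\cF|\ge 4$ because three edges always have a cover of size at most $3$, and in fact $\tau=3$ needs at least four edges.

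\textbf{Case split by matching number.} If $\nu(\cF)=1$, then $\cF$ is intersecting with $\tau=3$. Intersecting 3-graphs with $\tau=3$ live on at most $7$ vertices (Fano-type structures), and I would bound $|\Bext|$ directly by elementary counting of 3-subsets of $\V$ meeting all edges. If $\nu(\cF)\ge 2$, then $\cF$ contains two disjoint edges $A,B$, and every transversal $T$ must meet both. Moreover $|\V|\le 3|\cF|\le 33$, but a transversal configuration is effectively bounded: $T$ has one vertex in $A$ and one in $B$, plus a third vertex determined by the remaining edges. This gives finitely many isomorphism types after normalization. The cases $\nu=2$ and $\nu=3$ form two finite kernels, which I would enumerate by computer with symmetry pruning (canonical forms, adding edges in order, and pruning once $|\Bext|$ drops below the target threshold). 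The goal is the table of maxima $\max|\Bext(\cF)|$ for $|\cF|=4,\dots,11$. That table must satisfy $|\cF|\max|\Bext|\le 100$, which also settles $|\cF|=11$ directly ($\le 9$).

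\textbf{Sharpness and main obstacle.} For sharpness I exhibit two disjoint ten-edge 3-graphs on six vertices. A natural choice is $\cF$ a set of ten triples of $[6]$ closed under no complement, with $\cG$ its complementary triples $\{[6]\setminus F\}$. Each triple of $[6]$ meets every triple except its complement, so the required cross-intersection condition must be arranged via a 2-coloring of complementary pairs such that $\tau=3$ holds on each side. I would verify this by direct check. The main obstacle is the finite kernel computation for $\nu\ge 2$: proving that the reduction to finitely many vertices is valid and that each pruning rule is sound, so that the exhaustive search really covers all $\cF$.
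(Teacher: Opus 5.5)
\textbf{The main gap.} Your reduction to the blocker is fine up to the sentence ``The condition $\tau(\cG)=3$ may be dropped for the upper bound.'' That is where the proof breaks. The weakened finite statement you then plan to verify by computer is false: it claims $|\cF|\,|\Bext(\cF)|\le100$ for every 3-graph $\cF$ with $\tau(\cF)=3$ and $4\le|\cF|\le11$.

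For a counterexample, take pairwise disjoint $A=\{a_1,a_2,a_3\}$, $B=\{b_1,b_2,b_3\}$, $C$, and
\[ \cF=\{A,B,C,\{a_1,a_2,b_1\},\{a_1,a_2,b_2\},\{a_1,a_2,b_3\}\}. \]
\begin{itemize}
\item The set $\{a_1,b_1,c_1\}$ is a cover, so $\tau(\cF)=\nu(\cF)=3$.
\item Every 3-transversal takes exactly one point from each of $A,B,C$.
\item It cannot take $a_3$, since it would then have to contain all of $B$.
\end{itemize}
Hence $\Bext(\cF)=\{a_1,a_2\}\times B\times C$ has $18$ members, and $|\cF|\,|\Bext(\cF)|=108$. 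Adding the edges $\{a_1,a_2,c_j\}$ leaves the blocker unchanged and pushes the product to $162$ at $|\cF|=9$.

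This is not a counterexample to the theorem only because $\{a_1,a_2\}$ covers $\Bext(\cF)$. The hypothesis you discard is exactly what makes the finite statement true. Your $\nu\ge2$ searches would therefore report maxima far above $100/|\cF|$, and the argument would not close.

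The paper keeps this hypothesis throughout:
\begin{itemize}
\item Lemma~\ref{lem:reduction} records $\tau(\Bext(\cG))=3$.
\item The common-union Lemma~\ref{lem:union} uses it to confine the $\nu=3$ branch to the nine vertices of $A\cup B\cup C$.
\item Both kernels impose an exact 2-cover test on the blocker at every leaf: the nine-vertex pair test, and \texttt{tauExact} with its three forms of 2-sets.
\end{itemize}
Only the intersecting branch is safe without it, because Lemma~\ref{lem:intersecting} gives $|\Tthree|\le19$ unconditionally. You still owe that count, though: ``at most $7$ vertices'' is unproved and does not by itself bound $|\Bext|$.

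\textbf{Smaller errors.}
\begin{itemize}
\item You claim $\tau=3$ needs four edges, but three pairwise disjoint edges already have $\tau=3$. So $|\cF|=3$ occurs and must be treated: the blocker lies in a $27$-point grid, giving product at most $81$.
\item Your ``natural choice'' for sharpness fails. With $\cG=\{[6]\setminus F:F\in\cF\}$, each $F$ is disjoint from its own complement in $\cG$, so the pair is not cross-intersecting. Both members of each complementary pair must go on the same side, five pairs per side. You must also exhibit a split in which every 2-subset of $[6]$ lies in a member of each side, since that is what gives $\tau=3$; you never produce one.
\item Appealing to an equality characterization for $m(3,3)=121$ is unverified and unnecessary. $m(3,3)=121$ already bounds the smaller family by $11$ edges, and the $11$-edge case is then settled by the correctly constrained computation.
\end{itemize}
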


Our proof is computer-assisted, but the computational part is confined to two
small finite kernels.  The first is a subset dynamic program on 27-bit masks.
The second enumerates 39 core types and partitions a bounded set of incidence
slots.  Neither search assumes an a priori bound on the ambient vertex set.
We give a proof of the encodings, the exact covering-number tests, and every
pruning principle used by the programs.  The complete C++20 source,
certificates, and concrete witnesses are preserved in a versioned
reproducibility archive~\cite{artifact} that does not contain this manuscript.

The proof uses $m(3,3)=121$ only to show that the smaller of $\cF$ and $\cG$
has at most eleven edges.  In particular, the case of eleven edges is checked
directly; we do not use the equality classification from~\cite{LFW}.

\section{External blockers and the basic reduction}

All hypergraphs are regarded as living on their support
\[
  \V(\cH)=\bigcup_{H\in\cH}H.
\]
This convention ensures that the transversal families below are finite.

\begin{definition}
If $\cH$ is a 3-graph with $\tau(\cH)=3$, define
\[
  \Tthree(\cH)
    =\{T\subseteq \V(\cH): |T|=3\text{ and }T\cap H\ne\varnothing
      \text{ for all }H\in\cH\}
\]
and define its \emph{external 3-blocker} by
\[
  \Bext(\cH)=\Tthree(\cH)\setminus\cH.
\]
\end{definition}

\begin{lemma}[External-blocker reduction]\label{lem:reduction}
Let $\cF,\cG$ satisfy the hypotheses of Theorem~\ref{thm:main}, and suppose
$|\cG|\le |\cF|$.  Put $b=|\cG|$ and $\cB=\Bext(\cG)$.  Then
\[
  b\le 11,\qquad \cF\subseteq\cB,
  \qquad \tau(\cG)=\tau(\cB)=3,
\]
and consequently $|\cF||\cG|\le b|\cB|$.
\end{lemma}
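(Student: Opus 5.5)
We prove the four assertions separately.

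\emph{Bound $b\le 11$.} The pair $(\cF,\cG)$ is admissible for $m(3,3)$: both are 3-graphs with covering number three and they are cross-intersecting. So by the result of Lin, Frankl, and Wu, $|\cF||\cG|\le m(3,3)=121$. Since $b=|\cG|\le|\cF|$, we get $b^2\le|\cF||\cG|\le 121$, hence $b\le 11$.

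\emph{Inclusion $\cF\subseteq\cB$.} Take $F\in\cF$. Then $|F|=3$, and cross-intersection gives $F\cap G\ne\varnothing$ for every $G\in\cG$. It remains to check that $F\subseteq\V(\cG)$. If some $x\in F$ lay outside $\V(\cG)$, then $F\cap\V(\cG)$ would be a set of at most two vertices meeting every member of $\cG$, because $F$ meets each $G\subseteq\V(\cG)$ only inside $\V(\cG)$. That would give $\tau(\cG)\le 2$, a contradiction. Hence $F\subseteq\V(\cG)$ and $F\in\Tthree(\cG)$. Disjointness gives $F\notin\cG$, so $F\in\cB$. This support step is the only point needing care; the rest is bookkeeping.

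\emph{Covering numbers.} We have $\tau(\cG)=3$ by hypothesis. For $\cB$, we have $\cF\subseteq\cB$ and $\tau$ is monotone under inclusion of families, so $\tau(\cB)\ge\tau(\cF)=3$. Also $\cB$ is nonempty, and any member of $\cB$ is a 3-set, so every member of the nonempty family $\cF\subseteq\cB$ witnesses that $\cB$ is a 3-graph. Any $G\in\cG$ meets every $T\in\cB$, since each $T\in\cB$ is a transversal of $\cG$. Thus $G$ is a 3-element set meeting all members of $\cB$, and $\tau(\cB)\le 3$. Therefore $\tau(\cB)=3$.

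\emph{Product bound.} From $\cF\subseteq\cB$ we get $|\cF|\le|\cB|$. Multiplying by $b=|\cG|$ gives $|\cF||\cG|\le b|\cB|$.
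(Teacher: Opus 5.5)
Your proposal is correct and follows essentially the same route as the paper's proof: $m(3,3)=121$ gives $b\le 11$; the support argument shows $F\subseteq\V(\cG)$ (you take $F\cap\V(\cG)$ as the small cover where the paper takes $F\setminus\{x\}$); $\tau(\cB)\ge\tau(\cF)$ by monotonicity; and any edge of $\cG$ is a $3$-cover of $\cB$.
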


\begin{proof}
The theorem $m(3,3)=121$ gives
\[
  b^2\le |\cF||\cG|\le 121,
\]
so $b\le 11$.  Let $F\in\cF$.  Cross-intersection says that $F$ meets every
edge of $\cG$.  Moreover, $F\subseteq\V(\cG)$: if
$x\in F\setminus\V(\cG)$, then $x$ belongs to no edge of $\cG$, so the
2-set $F\setminus\{x\}$ covers $\cG$, contradicting $\tau(\cG)=3$.
Consequently $F\in\Tthree(\cG)$, while disjointness says
$F\notin\cG$.  Thus $\cF\subseteq\cB$.

Any cover of $\cB$ covers its subfamily $\cF$, and therefore
$\tau(\cB)\ge\tau(\cF)=3$.  Conversely, every edge of $\cG$ meets every
member of $\cB$ by the definition of a transversal.  Hence each edge of
$\cG$ is a 3-cover of $\cB$, giving $\tau(\cB)\le3$.
\end{proof}

It remains to prove
\begin{equation}\label{eq:blocker-target}
  b|\Bext(\cG)|\le 100
\end{equation}
for $3\le b\le11$.  The lower endpoint is automatic: a family with fewer
than three edges has a cover of size at most two.  If $b=3$, then the three
edges are pairwise disjoint, since a common point of two edges together with
one point from the third would give a 2-cover.  A 3-transversal chooses one
point from each of three 3-sets, so $|\Bext(\cG)|\le27$ and
$b|\Bext(\cG)|\le81$.

Write $\nu(\cG)$ for the matching number.  Since every cover meets each edge
of a matching,
\[
  1\le \nu(\cG)\le\tau(\cG)=3.
\]
We treat the three possible matching numbers separately.

\section{The intersecting branch}

The case $\nu(\cG)=1$ follows from a direct transversal count.

\begin{lemma}\label{lem:intersecting}
If $\cH$ is an intersecting 3-graph with $\tau(\cH)=3$, then
\[
  |\Tthree(\cH)|\le19.
\]
\end{lemma}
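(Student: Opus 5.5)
The plan is to count $\Tthree(\cH)$ by how each transversal meets one fixed edge. Fix $E=\{a,b,c\}\in\cH$. Every $T\in\Tthree(\cH)$ meets $E$, so
\[
  |\Tthree(\cH)|=\sum_{\varnothing\ne S\subseteq E}N_S,
\]
where $N_S$ is the number of transversals $T$ with $T\cap E=S$. The term $S=E$ contributes exactly $1$, namely $E$ itself, which is a transversal because $\cH$ is intersecting.

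For $|S|=2$, say $S=\{a,b\}$, the third point $z\notin E$ must lie in every edge avoiding both $a$ and $b$. There is such an edge: otherwise $\{a,b\}$ would be a 2-cover, contradicting $\tau(\cH)=3$. Hence $z$ lies in a fixed 3-set $A$. Because $\cH$ is intersecting, that edge must meet $E$, so it contains $c$ and $|A\setminus E|\le2$. Thus $N_{\{a,b\}}\le2$, and the three pairs together contribute at most $6$.

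For $|S|=1$, say $S=\{a\}$, write $\cH_a$ for the edges avoiding $a$, and write $E'=E\setminus\{a\}=\{b,c\}$. Then $T=\{a,y,z\}$ with $y,z\notin E$, and $\{y,z\}$ must be a 2-cover of $\cH_a$. The facts I would use are these:
\begin{itemize}[nosep]
  \item every edge of $\cH_a$ meets $E'$;
  \item $\tau(\cH_a)=2$, since $\tau(\cH_a)\ge2$ and $E'$ is a 2-cover;
  \item $\cH_a$ is intersecting.
\end{itemize}
Pick edges $A\ni b$ and $B\ni c$ of $\cH_a$. One of each exists, since neither $\{a,b\}$ nor $\{a,c\}$ covers $\cH$, and we may take $A\ne B$. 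Each of $A\setminus E'$ and $B\setminus E'$ has at most two points outside $E$. Hence $\{y,z\}$ must meet two small sets that are themselves linked by the intersecting property.

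The aim is $N_{\{a\}}\le4$, giving at most $12$ from the singletons. If $A\cap B\setminus E$ is nonempty, the covers through a common point need extra edges of $\cH_a$ to be cut down. If not, the covers are pairs from $(A\setminus E)\times(B\setminus E)$, which gives at most $4$ directly. This would yield $1+6+12=19$.

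The main obstacle is that the bounds $N_{\{a,b\}}\le2$ and $N_{\{a\}}\le4$ cannot all be tight at once. A common point $p\in A\cap B$ outside $E$ can produce many covers $\{p,z\}$ unless further edges of $\cH_a$ avoid $p$. Such an edge exists because $\{a,p\}$ is not a cover of $\cH$, and it must meet $E'$ and both $A$ and $B$. I would exploit that edge to cap the count, trading excess in one $N_S$ against deficits in others; a crude bound gives roughly $27$, and this interaction is exactly what has to be controlled.

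If the hand case analysis becomes unwieldy, the fallback is to prove that the support is bounded. Every vertex lies in an edge, every edge meets $E$, and the pairwise-intersection constraints bound $|\V(\cH)|$ by a small constant. One can then enumerate intersecting 3-graphs with $\tau=3$ up to isomorphism, whose number of edges is at most $10$ by the classical Frankl bound, and verify $|\Tthree(\cH)|\le19$ directly.
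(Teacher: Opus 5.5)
Your decomposition by $T\cap E$ is the one the paper uses. Your bounds $N_E=1$ and $N_{\{a,b\}}\le 2$ are correct, giving at most $7$ transversals that meet $E$ in at least two points.

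The gap is the singleton case: you never prove $N_{\{a\}}\le 4$, and the route you sketch cannot deliver it as set up. Take $A$ and $B$ to be the edges your existence argument produces, avoiding $\{a,c\}$ and $\{a,b\}$ respectively. Then $A\cap E=\{b\}$ and $B\cap E=\{c\}$, so the intersecting property forces $A\cap B$ to contain a point outside $E$. Your ``direct'' case is therefore vacuous, and the case you leave open is the only one.

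Your diagnosis is also off. Nothing needs to be traded between different $N_S$: $1+3\cdot 2+3\cdot 4=19$, and each bound holds on its own. The computer fallback is only a plan resting on an enumeration you have not carried out, so it does not close the gap either.

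The missing idea is to choose the second edge adaptively, depending on the first outside point:
\begin{enumerate}
\item Fix one edge $D$ avoiding $a$. A transversal $T$ with $T\cap E=\{a\}$ must meet $D$ outside $E$, so it contains some $x\in D\setminus E$.
\item There are at most two such $x$, because $D$ meets $\{b,c\}$.
\item For each such $x$, the pair $\{a,x\}$ is not a cover, so some edge $D_x$ avoids both $a$ and $x$.
\item $D_x$ meets $\{b,c\}$, so $|D_x\setminus E|\le 2$, and the remaining point of $T$ must lie in $D_x\setminus E$.
\end{enumerate}
Hence $N_{\{a\}}\le 2\cdot 2=4$.

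You actually wrote down the relevant edge, the one avoiding $\{a,p\}$. What you missed is that, because it meets $E'$, it leaves at most two choices for the partner of $p$. This needs no reference to $A$, $B$, or the other $N_S$.
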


\begin{proof}
Fix $A=\{a_1,a_2,a_3\}\in\cH$.  We first count transversals containing at
least two points of $A$.  For each pair $P\subset A$, choose an edge avoiding
$P$, which is possible because $P$ is not a cover.  Since $\cH$ is
intersecting, this edge contains the third point of $A$ and at most two points
outside $A$.  A transversal extending $P$ must choose its third point from
that edge.  Apart from $A$ itself, this gives at most two extensions for each
of the three pairs, hence at most seven transversals.

Now fix $a_i$ and consider transversals meeting $A$ exactly in $a_i$.  Choose
an edge $D$ avoiding $a_i$.  It meets $A\setminus\{a_i\}$ and therefore has
at most two points outside $A$.  The transversal must contain one of these
outside points, say $x$.  The pair $\{a_i,x\}$ is not a cover, so there is an
edge avoiding it.  That edge again meets $A\setminus\{a_i\}$ and offers at
most two outside choices for the final point.  Thus there are at most four
transversals for each $a_i$, or twelve in total.  Adding the two classes gives
$7+12=19$.
\end{proof}

Every edge of an intersecting 3-graph is itself a transversal.  Hence, for
$4\le b\le11$,
\[
  |\Bext(\cG)|\le19-b,
  \qquad b|\Bext(\cG)|\le b(19-b)\le90.
\]

\section{The common-union lemma}

The two nonintersecting branches use the following observation from
Lin--Frankl--Wu~\cite{LFW}, for which we include the short proof.

\begin{lemma}[Common union]\label{lem:union}
If 3-graphs $\cH$ and $\cK$ are cross-intersecting and satisfy
$\tau(\cH)=\tau(\cK)=3$, then $\V(\cH)=\V(\cK)$.
\end{lemma}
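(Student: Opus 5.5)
The plan is to prove the inclusion $\V(\cH)\subseteq\V(\cK)$; the reverse inclusion then follows by exchanging the roles of $\cH$ and $\cK$, since both the hypotheses and the conclusion are symmetric. The argument is the same one already used inside the proof of Lemma~\ref{lem:reduction}, where we showed that every member of $\cF$ lies in $\V(\cG)$.

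To carry it out, fix a vertex $x\in\V(\cH)$ and choose an edge $H\in\cH$ with $x\in H$. Suppose, for contradiction, that $x\notin\V(\cK)$, so that no edge of $\cK$ contains $x$. By cross-intersection, $H$ meets every $K\in\cK$. Since $x\notin K$, every such intersection point lies in the 2-set $H\setminus\{x\}$. Hence $H\setminus\{x\}$ is a cover of $\cK$ of size two, contradicting $\tau(\cK)=3$. Therefore $x\in\V(\cK)$, and so $\V(\cH)\subseteq\V(\cK)$.

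There is no real obstacle here. The only points to check are that $\cH$ is nonempty, which holds because $\tau(\cH)=3>0$, and that every vertex of the support lies in some edge, which is true by the definition of the support. The key step is the observation that removing a vertex lying outside $\V(\cK)$ from an edge of $\cH$ still leaves a cover of $\cK$.
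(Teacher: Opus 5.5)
Your proof is correct and matches the paper's argument: if $x\in H\in\cH$ but $x\notin\V(\cK)$, then cross-intersection makes the 2-set $H\setminus\{x\}$ a cover of $\cK$, contradicting $\tau(\cK)=3$, and the reverse inclusion follows by symmetry. Your remark about $\cH$ being nonempty is harmless but not needed, since the argument starts from an arbitrary vertex of the support.
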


\begin{proof}
Suppose that $x\in H\in\cH$ but $x\notin\V(\cK)$.  Since $H$ meets every
edge of $\cK$, the two-set $H\setminus\{x\}$ also meets every edge of $\cK$,
contrary to $\tau(\cK)=3$.  The reverse inclusion is symmetric.
\end{proof}

We apply the lemma to $\cG$ and $\Bext(\cG)$, which are cross-intersecting by
definition and have covering number three by Lemma~\ref{lem:reduction}.

\section{The branch of matching number three}

Assume $\nu(\cG)=3$, and fix three pairwise disjoint edges $A,B,C$ of
$\cG$.  Any 3-transversal meets each of these edges exactly once, so it is a
point of the grid
\[
  \mathcal Q=A\times B\times C,
  \qquad |\mathcal Q|=27.
\]
In particular, $\Bext(\cG)\subseteq\mathcal Q$.  The common-union lemma gives
\[
  \V(\cG)=\V(\Bext(\cG))\subseteq A\cup B\cup C,
\]
so the entire family lies on nine vertices.

Besides $A,B,C$, there are
\[
  \binom{9}{3}-3=81
\]
possible edges.  For each such edge $E$, define its \emph{kill mask}
$K(E)\subseteq\mathcal Q$ by
\[
  K(E)=\{Q\in\mathcal Q: Q\cap E=\varnothing\}
       \cup \bigl(\{E\}\cap\mathcal Q\bigr).
\]
The first term removes grid points that fail to meet $E$; the second removes
$E$ itself because the blocker is external.  If $E_1,\dots,E_{b-3}$ are the
additional edges of $\cG$, then
\begin{equation}\label{eq:grid-blocker}
  \Bext(\cG)=\mathcal Q\setminus
  \bigl(K(E_1)\cup\cdots\cup K(E_{b-3})\bigr).
\end{equation}

Equation~\eqref{eq:grid-blocker} gives a compact exhaustive search.  After
scanning some of the 81 candidate edges, the dynamic-programming state
$(r,M)$ records that the union mask $M$ is obtainable by choosing exactly $r$
distinct scanned edges.  Updates run in descending $r$, preventing reuse of
an edge.  Equal masks are merged because the blocker, its cardinality, and its
covering number depend only on $M$.

For a target $t$, a state is retained only if
\[
  |M|\le26-t.
\]
Indeed, a counterexample to $|\Bext(\cG)|\le t$ would have at most $26-t$
killed grid points.  Since kill masks only grow under union, a discarded state
cannot later become a counterexample.  At depth $b-3$, the program forms the
surviving grid family and tests all pairs of the nine vertices.  The surviving
family has covering number three exactly when none of these pairs covers it;
the same test also excludes a cover of size zero or one.

This proves the following certificate statement.

\begin{proposition}\label{prop:matching-three-certificate}
For fixed $b\in\{4,\dots,11\}$, the program
\texttt{check\_matching\_three} with target $t$ returns success only after
representing every external-blocker mask arising from a $b$-edge family in the
matching-number-three branch that has covering number three and could have size
greater than $t$.  Consequently a successful run proves
$|\Bext(\cG)|\le t$ in this branch.
\end{proposition}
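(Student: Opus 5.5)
Our plan is to verify that the dynamic program computes exactly the set of kill-mask unions, apart from states that the pruning rule discards, and that discarded states cannot lead to counterexamples. We first justify the reduction to the nine-vertex grid setting. Let $\cG$ be any $b$-edge family with $\nu(\cG)=3$ and $\tau(\cG)=3$. After relabeling, the fixed disjoint edges $A,B,C$ are the standard triples on $\{1,\dots,9\}$. This is legitimate because the program fixes one labeling, and any family can be relabeled so that a chosen maximum matching becomes $A,B,C$. By the common-union lemma (Lemma~\ref{lem:union}) applied to $\cG$ and $\Bext(\cG)$, every edge of $\cG$ lies inside $A\cup B\cup C$. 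Hence the remaining $b-3$ edges are distinct members of the 81 candidates, and equation~\eqref{eq:grid-blocker} expresses $\Bext(\cG)$ through the union of their kill masks.

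We verify~\eqref{eq:grid-blocker} directly. A point $Q\in\mathcal Q$ lies in $\Tthree(\cG)$ if and only if it meets $A,B,C$, which it always does, and meets each $E_i$. So the only non-transversals in the grid are the points lying in some $\{Q:Q\cap E_i=\varnothing\}$. The external condition removes exactly those $Q$ that equal some edge of $\cG$. Since $Q$ is a transversal of the grid type, it can never equal $A$, $B$ or $C$, so only the $E_i$ that happen to be grid points need removing. Finally $\Tthree(\cG)\subseteq\mathcal Q$, because each transversal must meet the three disjoint edges, and it has only three points.

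Next we argue by induction on the number of scanned candidate edges. The invariant is this: for every set $S$ of $r$ distinct scanned edges whose union mask $M_S$ satisfies $|M_S|\le 26-t$, the state $(r,M_S)$ is present. Adding an edge $E$ produces $(r+1,M_S\cup K(E))$ from $(r,M_S)$. Processing $r$ in descending order uses only states from before $E$ was scanned, so $E$ is never used twice. Merging equal masks is harmless because every later quantity is a function of $M$ alone. The pruning rule is sound by monotonicity. If $|M_S|>26-t$, then every superset $S'$ has $|M_{S'}|\ge|M_S|$, so $|\Bext|=27-|M_{S'}|<t+1$, and no extension is a counterexample. (We use $27$ here, since $26-t$ killed points corresponds to $|\Bext|\ge t+1$.)

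The remaining step is the covering-number test, which we expect to be the main obstacle because of the boundary cases. The proposition concerns families for which $\tau$ is stated, and the program tests the surviving grid family for having no cover of size at most two. We must check that this test does not lose any genuine configuration. By Lemma~\ref{lem:reduction}, a genuine $\cG$ has $\tau(\Bext(\cG))=3$, so its mask passes the test. Testing all pairs of the nine vertices suffices: any cover of size at most two extends to a pair, and every vertex of the surviving family lies in the nine points. An empty surviving family is covered by any pair, so it is correctly excluded. Hence every relevant mask reaches depth $b-3$ and is tested. A successful run, meaning that no tested mask has more than $26-t$ survivors, therefore proves $|\Bext(\cG)|\le t$.
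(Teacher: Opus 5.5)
Your proposal is correct and follows the same route as the paper's proof: the nine-vertex reduction via Lemma~\ref{lem:union}, exhaustive enumeration of $b-3$ distinct candidates by the descending subset DP, the mask semantics \eqref{eq:grid-blocker}, monotone threshold pruning, and the exact pair test. You spell these steps out in more detail than the paper does. There are two small fixes. First, the common-union step in your first paragraph already needs $\tau(\Bext(\cG))=3$, which you only invoke in your last paragraph. Second, in your last sentence, ``success'' should mean that no retained depth-$(b-3)$ mask passes the pair test; a retained mask has at most $26-t$ killed points, i.e.\ more than $t$ survivors. It should not read ``no mask has more than $26-t$ survivors.''
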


\begin{proof}
The nine-vertex reduction is exhaustive by Lemma~\ref{lem:union}.  The list of
81 candidates contains every possible additional edge exactly once, and the
descending subset dynamic program represents every choice of $b-3$ distinct
candidates.  Equation~\eqref{eq:grid-blocker} proves the mask semantics.  The
only early deletion is the monotone cardinality test above, and the final
pair test is equivalent to covering number three.  Thus no family satisfying
the hypotheses and violating the target is omitted.
\end{proof}

\section{The branch of matching number two}

Assume $\nu(\cG)=2$.  Fix disjoint edges
\[
  A=\{0,1,2\},\qquad B=\{3,4,5\},\qquad U=A\cup B.
\]
Write $R=b-2$, so $2\le R\le9$.  Every other edge meets $U$, since an edge
disjoint from $U$ would extend $A,B$ to a 3-matching.

\subsection{Core types and incidence rows}

For an additional edge $E$, its \emph{core} is $E\cap U$.  The core is
nonempty, has size at most three, and is neither $A$ nor $B$.  There are
\[
  \binom61+\binom62+\binom63-2=39
\]
possible core types.  A core of size $s$ has $3-s$ outside-incidence slots.

Fix a multiset of $R$ core types.  Every vertex outside $U$ is encoded by a
nonempty row $S\subseteq[R]$, consisting of the additional edges that contain
that vertex.  The rows partition the outside-incidence slots: column $i$
occurs in exactly $3-|E_i\cap U|$ rows.  Conversely, the core multiset and any
such row partition reconstruct a 3-graph containing the fixed 2-matching.
Duplicate edges are rejected, and at a completed leaf the program explicitly
checks that no three edges are pairwise disjoint.

This representation removes any dependence on the size of an ambient ground
set.  There are at most $2R\le18$ outside slots, hence at most 18 relevant
outside vertices.

\subsection{An exact formula for the external blocker}

Let
\[
  \mathcal P=\{\{a,b\}:a\in A,\ b\in B\}
\]
be the nine cross-pairs.  For $p\in\mathcal P$, let $D_p\subseteq[R]$ be the
additional-edge columns whose cores avoid $p$.  If an outside vertex $v$ has
row $S$, then
\begin{equation}\label{eq:row-condition}
  p\cup\{v\}\text{ meets every edge of }\cG
  \quad\Longleftrightarrow\quad D_p\subseteq S.
\end{equation}
The triple is placed in the external blocker unless it is itself an edge of
$\cG$.  Notice that every blocker member not contained in $U$ must have
exactly one point in $A$, one in $B$, and one outside $U$.

The remaining blocker members lie entirely in $U$.  Let $\mathcal I$ be the
set of 3-subsets of $U$ that meet $A$, $B$, and every additional core, with
the internal edges of $\cG$ removed.  Then $\mathcal I$ is exactly the
internal part of $\Bext(\cG)$, and~\eqref{eq:row-condition} describes the
external part.  These two parts are disjoint, so the program's score
\begin{equation}\label{eq:blocker-score}
  |\mathcal I|+
  \sum_{\text{outside rows }S}
  \bigl|\{p\in\mathcal P:D_p\subseteq S,
            \ p\cup\{v_S\}\notin\cG\}\bigr|
\end{equation}
equals $|\Bext(\cG)|$ exactly.

\subsection{The exact covering-number test}

Every 2-set has one of the following forms:
\begin{enumerate}[label=(\roman*),leftmargin=2.2em]
  \item two vertices of $U$;
  \item one vertex of $U$ and one outside vertex;
  \item two outside vertices.
\end{enumerate}
For a core pair $q\subset U$, either an internal blocker member avoids $q$, or
an external blocker member $p\cup\{v\}$ avoids it, which occurs precisely when
$p\cap q=\varnothing$.  This gives the exact test for (i).

Fix $u\in U$.  If $u$ meets every member of $\mathcal I$, then for every
outside row $S$ the pair $\{u,v_S\}$ fails to cover the blocker exactly when
some \emph{other} row contributes an external blocker member whose cross-pair
avoids $u$.  Thus at least two distinct rows must contribute such members.
This gives the exact test for (ii).  If $\mathcal I$ is nonempty, every pair
of outside vertices already misses an internal blocker member.  If
$\mathcal I$ is empty, a pair of outside vertices fails to cover exactly when
some third outside row contributes a blocker member.  Hence at least three
distinct rows must contribute.  This gives the exact test for (iii).

The three conditions are implemented by the leaf predicate
\texttt{tauExact}.  They test every possible 2-cover and therefore are
equivalent to $\tau(\Bext(\cG))=3$.

\subsection{Exhaustive generation and safe pruning}

Core multisets are generated in sorted order.  The program keeps only the
lexicographically least representative under the 72 automorphisms obtained by
permuting $A$, permuting $B$, and optionally swapping the two fixed edges.
If a sorted prefix is not least in its orbit, adding further elements cannot
make the complete sorted multiset least: after applying the improving
automorphism, the first entries of the completed sorted image are no larger
than the improved prefix.  Prefix canonicalization is therefore safe.

For a fixed core multiset, row partitions are generated canonically by taking
the least column with unused capacity, forming every possible row containing
it, and ordering repeated rows with the same least column.  This is the
standard restricted-growth recursion for a multiset of incidence slots and
produces every row partition.

The main numerical upper bound is
\begin{equation}\label{eq:cheap-bound}
  |\mathcal I|+
  \sum_{p\in\mathcal P}
  \min_{i\in D_p}(3-|E_i\cap U|).
\end{equation}
For fixed $p$, every row contributing the blocker triple $p\cup\{v\}$ must
contain every column in $D_p$.  The number of such distinct rows is therefore
at most the smallest capacity of one of those columns.  This proves
\eqref{eq:cheap-bound}.  If some $D_p$ is empty, then $p$ is a 2-cover of
$\cG$, so the core multiset may be rejected.  During row generation, the same
bound is recomputed using remaining capacities.

The remaining feasibility pruning mirrors the three pair tests above.  For
every blocker witness still required to defeat a possible 2-cover, the
program checks whether the remaining column capacities could supply one, two,
or three future rows.  It ignores some compatibility requirements, so it is
optimistic: a positive answer retains the node, while a negative answer proves
that no completion can satisfy the leaf test.  Thus this pruning cannot remove
a counterexample.

We obtain the matching-number-two analogue of
Proposition~\ref{prop:matching-three-certificate}.

\begin{proposition}\label{prop:matching-two-certificate}
For fixed $b\in\{4,\dots,11\}$, the program
\texttt{check\_matching\_two} with target $t$ returns success only after
representing every $b$-edge family in the matching-number-two branch whose
external blocker has covering number three and could have size greater than
$t$.  Consequently a successful run proves $|\Bext(\cG)|\le t$ in this
branch.
\end{proposition}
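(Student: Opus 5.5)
The argument mirrors the proof of Proposition~\ref{prop:matching-three-certificate}. We check, in order, that each stage of \texttt{check\_matching\_two} is \emph{exhaustive} (every relevant family is represented), \emph{exact at the leaves} (the computed score and covering test agree with the true $|\Bext(\cG)|$ and $\tau(\Bext(\cG))$), and \emph{safe in its pruning} (every discarded node has no completion that violates the target). A successful run can then miss no counterexample.

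\emph{Exhaustiveness.} Let $\cG$ have $b$ edges, $\nu(\cG)=2$, and $\tau(\Bext(\cG))=3$.
\begin{enumerate}[label=(\arabic*),leftmargin=2.2em]
\item Relabel vertices so that a fixed 2-matching is $A=\{0,1,2\}$, $B=\{3,4,5\}$.
\item Each of the remaining $R=b-2$ edges meets $U$, since otherwise $\nu(\cG)\ge3$. Its core is therefore one of the 39 types.
\item The multiset of cores is defined up to the 72 symmetries of the pair $\{A,B\}$. The prefix-canonicity argument already given shows that the orbit-least sorted representative survives generation.
\item Vertices outside $U$ matter only through their rows; vertices in no edge are irrelevant because hypergraphs live on their supports. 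Hence $\cG$ is determined, up to isomorphism, by its core multiset together with a partition of the outside slots into rows.
\item The restricted-growth recursion produces every such partition.
\item The leaf rejects duplicate edges and 3-matchings, so exactly the families of the branch remain.
\end{enumerate}
Isomorphic families have isomorphic blockers, so covering one representative per orbit suffices.

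\emph{Exactness.} At a leaf, the score \eqref{eq:blocker-score} equals $|\Bext(\cG)|$. This holds because every $T\in\Tthree(\cG)$ meets $A$ and $B$. So either $T\subseteq U$, and then $T$ is counted in $\mathcal I$, or $T=p\cup\{v\}$ with $p\in\mathcal P$, and then $T$ is counted via \eqref{eq:row-condition}. These two cases do not overlap.

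By Lemma~\ref{lem:union}, $\V(\Bext(\cG))=\V(\cG)$, so every potential 2-cover uses vertices from $U$ or from the outside rows. Types (i)--(iii) therefore exhaust the possibilities, and \texttt{tauExact} is equivalent to $\tau(\Bext(\cG))=3$. The lower bound $\tau\ge1,2$ is implied by the absence of 2-covers together with nonemptiness.

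\emph{Safe pruning.} There are three rules to check.
\begin{enumerate}[label=(\alph*),leftmargin=2.2em]
\item Rejecting a core multiset with some $D_p=\varnothing$ is safe: $p$ is then a 2-cover of $\cG$, contradicting $\tau(\cG)=3$.
\item The cheap bound \eqref{eq:cheap-bound}, recomputed with remaining capacities, is a genuine upper bound on the final score of every completion. The internal part is fixed by the cores. For each $p$, the number of rows that will still contain all of $D_p$ is at most the minimum remaining capacity over $D_p$, added to the rows already contributing. So pruning when this bound is at most $t$ never removes a family with $|\Bext(\cG)|>t$.
\item The feasibility pruning only relaxes the leaf conditions. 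Any completion passing \texttt{tauExact} would pass the relaxed test, so a negative relaxed answer proves that no completion passes.
\end{enumerate}

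\emph{Main obstacle.} The delicate step is (b) interacting with symmetry reduction and the partial-row state. We must confirm that the bound is monotone along the recursion: rows already formed contribute exactly, and future rows are bounded by residual capacities. We must also confirm that capacity accounting for a column correctly tracks how many more rows can contain it. I would prove this by induction on the recursion depth, showing that the bound at a node dominates the bound at each child. Combining the three steps, a successful run certifies $|\Bext(\cG)|\le t$ in this branch.
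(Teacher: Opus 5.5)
Your proposal is correct and follows the paper's own proof essentially point by point, with more detail than the paper gives. Both arguments rest on the same three checks: exhaustiveness of the core-type/row encoding (with orbit representatives and restricted-growth partitions), exactness of the score \eqref{eq:blocker-score} and of \texttt{tauExact} at leaves, and safety of the $D_p=\varnothing$ rejection, the residual capacity bound \eqref{eq:cheap-bound}, and the optimistic feasibility test. One small remark: to restrict 2-covers to $\V(\cG)$ you do not need Lemma~\ref{lem:union}, whose hypothesis already includes $\tau(\Bext(\cG))=3$ (the property being tested), because the inclusion $\V(\Bext(\cG))\subseteq\V(\cG)$, immediate from the definition of $\Tthree$, suffices.
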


\begin{proof}
Every additional edge has one of the 39 core types, and every outside vertex
defines an incidence row, so the encoding is exhaustive.  The symmetry test
retains an orbit representative, and the row recursion generates every
partition of the incidence slots.  Duplicate edges and families with a
3-matching are tested explicitly at complete leaves.  Formula
\eqref{eq:blocker-score} gives the exact blocker size, and the three-form
analysis gives the exact covering-number predicate.  Bound
\eqref{eq:cheap-bound} and its residual version cannot underestimate a
completion.  The feasibility test removes a node only when the remaining
capacities cannot furnish a witness required by the exact leaf predicate.
Therefore no family satisfying the branch hypotheses and violating the target
is omitted.
\end{proof}

\section{The finite certificate}

Both kernels were compiled as C++20 programs using only the standard library.
The theorem run checks each value $b=4,\dots,11$ with target
$\lfloor100/b\rfloor$.  The exactness run repeats each case at the bound shown
below and then lowers the target by one; exit status 2 records an attained
state or family at the displayed value.

\begin{table}[ht]
  \centering
  \caption{Exact external-blocker maxima, attained separately in each
  nonintersecting branch.}
  \label{tab:bounds}
  \setlength{\tabcolsep}{7pt}
  \begin{tabular}{c@{\qquad}rrrrrrrr}
    \toprule
    $b=|\cG|$ & 4 & 5 & 6 & 7 & 8 & 9 & 10 & 11 \\
    \midrule
    $\max|\Bext(\cG)|$ & 21 & 19 & 16 & 14 & 12 & 11 & 10 & 9 \\
    $b\,|\Bext(\cG)|$ & 84 & 95 & 96 & 98 & 96 & 99 & 100 & 99 \\
    \bottomrule
  \end{tabular}
\end{table}

For the largest matching-number-two upper-bound case, the run processed
$9{,}813{,}493$ canonical core-sequence leaves and $55{,}133{,}110$ row-search
nodes.  For the largest matching-number-three upper-bound case, the final
dynamic-programming layer contained $31{,}023$ masks after threshold pruning.
The committed transcripts record all cases, and a SHA-256 manifest binds the
programs, drivers, documentation, and tests.  Whenever the lowered-threshold
run finds an attained value, the kernel emits a concrete family.  A separate
Python program recomputes the matching number, the covering numbers of both
the family and its external blocker, and the blocker cardinality for all
sixteen witnesses without sharing code with either exhaustive kernel.

Table~\ref{tab:bounds}, together with the intersecting estimate, proves
\eqref{eq:blocker-target} for every $3\le b\le11$.  Lemma~\ref{lem:reduction}
therefore proves the upper bound in Theorem~\ref{thm:main}.

\section{Sharpness}

Let the vertex set be $[6]$ and define
\begin{align*}
  \cF={}&\{123,456,124,356,125,346,134,256,136,245\},\\
  \cG={}&\{126,345,135,246,145,236,146,235,156,234\}.
\end{align*}
Here, for instance, $123$ abbreviates $\{1,2,3\}$.  The twenty triples of
$[6]$ split into ten complementary pairs.  The family $\cF$ is the union of
five complementary pairs and $\cG$ is the union of the other five.  Thus they
are disjoint.  Two triples of $[6]$ are disjoint exactly when they are
complements, and complements were placed in the same family, so the two
families are cross-intersecting.

The displayed lists also show that every 2-subset of $[6]$ is contained in a
member of each family.  Its complementary edge belongs to the same family and
is disjoint from that 2-set.  Hence no 2-set covers either family.  On the
other hand, every edge of one family is a 3-cover of the other by
cross-intersection.  It follows that $\tau(\cF)=\tau(\cG)=3$.  Therefore
\[
  |\cF||\cG|=10\cdot10=100,
\]
which proves sharpness.  The artifact includes a separate Python program that
verifies all of these assertions without sharing code with either exhaustive
kernel.

\section{Reproducibility and scope of the computation}

The complete certificate is reproduced by
\begin{center}
  \texttt{make check}\qquad\texttt{make test}\qquad\texttt{make exact}.
\end{center}
The first command compiles both kernels and proves all upper-bound cases.  The
second independently verifies the sharp construction and all sixteen concrete
exact-bound witnesses.  The third verifies the exact values in
Table~\ref{tab:bounds}.  The programs use fixed integer and bit-mask arithmetic;
there is no randomness, floating-point arithmetic, external solver, or
unchecked input data.  The largest case was also run under AddressSanitizer and
UndefinedBehaviorSanitizer.

The computer-assisted claim is deliberately narrow.  The human argument
reduces the theorem to a finite statement and proves that each program
enumerates that statement without unsafe pruning.  The machine then supplies
the finite exhaustion.  A proof-assistant formalization or an independent
implementation would provide a stronger assurance level, but neither is an
additional mathematical hypothesis of the certificate presented here.

\section*{Data and code availability}

Version 1.0.0 of the complete software certificate, including source code,
drivers, run transcripts, concrete witnesses, independent witness tests, and a
SHA-256 manifest, is archived on Zenodo~\cite{artifact}.  The archive contains
no manuscript source or PDF.  It is released under the MIT License.
Its version-specific persistent identifier is
\href{https://doi.org/10.5281/zenodo.21881247}
{doi:10.5281/zenodo.21881247}.

\section*{Acknowledgements and declarations}

\noindent\textbf{Acknowledgements.}
The authors thank Long Lin, Peter Frankl, and Hehui Wu for formulating the
problem and making their preprint available.

\medskip
\noindent\textbf{Author contributions.}
Arthur F. Ramos: proof development, software, validation, and writing---original
draft. David B. Hulak and Ruy J. G. B. de Queiroz: conceptualization, formal
analysis, and writing---review and editing.

\medskip
\noindent\textbf{Funding.}
This research received no specific grant from funding agencies in the public,
commercial, or not-for-profit sectors.

\medskip
\noindent\textbf{Competing interests.}
The authors declare no competing interests.

\section*{Declaration of generative AI and AI-assisted technologies}

During the preparation of this work, the authors used OpenAI Codex to assist
with proof exploration, implementation, test design, and manuscript editing.
The authors reviewed and edited the resulting material. Arthur F. Ramos reran
the deterministic certificate. The authors take full responsibility for the
content of the article.

\end{document}